\theoremstyle{plain} 
\newtheorem{theorem}{\indent\bf Theorem}[section]
\newtheorem{corollary}[theorem]{\indent\bf Corollary}
\theoremstyle{definition} 
\def\address#1#2{\begingroup
\noindent\parbox[t]{7.8cm}{%
\small{\scshape\ignorespaces#1}\par\vskip1ex
\noindent\small{\itshape E-mail address}%
\/: #2\par\vskip4ex}\hfill%
\endgroup}%
\title{ {\bf Surfaces of Revolution with Constant Gaussian Curvature in Four-Space}} 
\author{ Dang Van Cuong
%
\bigskip
\textsc{\ }}
\date{\today}
\begin{document}
\maketitle

\begin{abstract}
In this paper, we  show that the constant property of the Gaussian curvature of surfaces of revolution in both $\mathbb R^4$ and $\mathbb R_1^4$ depend only on the radius of rotation. We then give necessary and sufficient conditions for the Gaussian curvature of the general rotational surfaces whose meridians lie in two dimensional planes in $\mathbb R^4$ to be constant, and define the parametrization of the meridians when both the Gaussian curvature is constant and the rates of rotation are equal.
\end{abstract}
\noindent{\bf Mathematics Subject Classification.} 53A05, 53C50, 53A35.\\
\noindent{\bf Key words and phrases.} Surface of revolution, Surfaces with constant Gaussian curvature.

\section{Introduction}
It is well known that a regular surface in $\mathbb R^3$ is zero Gaussian curvature if and only if it is a part of a developable surface. A regular surface with constant Gaussian curvature, $K,$ is locally isometric to $H^2$ provided $K=-1,$ $\mathbb R^2$ provided $K=0,$ and $S^2$ provided $K=1.$ For the surfaces of revolution in $\mathbb R^3,$ it is easy to define the parametrization of the surfaces with constant Gaussian curvature.\\
\indent In recent years some mathematicians have taken an  interest in the surfaces of revolution in $\mathbb R^4,$  for example V. Milosheva (\cite{sheva}), U. Dursun and N. C. Turgay (\cite{dursun1}), K. Arslan (\cite{arslan}), \dots. In \cite{Ganchev}, V. Milosheva applied  invariance theory of surfaces in the four dimensional Euclidean space to the class of general rotational surfaces whose meridians lie in two-dimensional planes in order to find all minimal super-conformal surfaces. These surfaces were further studied by U. Dursun and N. C. Turgay in \cite{dursun1}, which found all minimal surfaces by solving the differential equation that characterizes minimal surfaces. They then determined all pseudo-umbilical general rotational surfaces in $\mathbb R^4$.  K. Arslan et.al in \cite{arslan}  gave the necessary and sufficient conditions for generalized rotation surfaces to become pseudo-umbilical, they also shown that each general rotational surface is a Chen surface in $\mathbb E^4$ and  gave some  special classes  of generalized rotational surfaces as examples.\\
\indent Let $M$ be a spacelike or timelike surface in Lorentz-Minkowski three-space $\mathbb R_1^3$ generated by a one-parameter family of circular arcs, R. L${\rm \acute{o}}$pez in \cite{lopez} shown that if its Gaussian curvature $K$ is a nonzero constant then $M$ is a surface of revolution, he also described the parametrizations for $M$ when $K=0.$ In \cite{C}, by  applying the $\mathfrak l_r^{\pm} $-Gauss maps, Cuong defined the parameterizations of minimal and totally umbilical  spacelike surfaces of revolution in $\mathbb R_1^4$.\\
\indent In this paper, we introduce the notions of surfaces of revolution in $\mathbb R^4$ and in Lorentz-Minkowski $\mathbb R_1^4,$ we then give necessary and sufficient conditions for the Gaussian curvature of these surfaces to be constant. We then give a differential  equation that characterizes the general rotational surfaces whose meridians lie in two dimensional planes in $\mathbb R^4$  with constant Gaussian curvature. In the case that rates of rotation are equal, we can define the parametrization of the meridians of these surfaces when its  Gaussian curvature is constant.
\section{Preliminaries}
Let $M$ be a semi-Riemannian surface, that is, a semi-Riemannian manifold of dimension two. For a coordinate system $u,v$ in $M$ the components of the metric tensor (the coefficients of the first fundamental form) are traditionally denoted by
$$E=g_{11}=\langle \partial_u,\partial_u\rangle, F=g_{12}=\langle \partial_u,\partial_v\rangle, G=g_{22}=\langle \partial_v,\partial_v\rangle.$$
Since $M$ is two-dimensional, $T_pM$ is the only tangent plane at $p$. Thus the sectional curvature $K$ becomes a real-valued function on $M$, called Gaussian curvature of $M.$\\
\indent Let $u,v$ be an orthogonal coordinate system in a semi-Riemannian surface, that means $F=\langle \partial_u,\partial_v\rangle=0.$ Then (see Proposition 4.4, pp. 81, \cite{onel})
$$K=-\frac{1}{eg}\left[\varepsilon_1\left(\frac{g_u}{e} \right)_u+\varepsilon_2\left(\frac{e_v}{g} \right)_v  \right],  $$
where $e=|E|^{1/2},g=|G|^{1/2}$ and $\varepsilon_1,\varepsilon_2$ are the sign of $E,G$ , respectively.\\
\indent If $M$ is a surface immersed in a manifold of constant curvature $C,$ the normal bundle of $M$ has a orthonormal frame $\{\nu_i\}_{i=1,\dots,n}$ then by using the equation of Gauss we have
$$K=C+\sum_{i=1}^nK_{\nu_i},$$
where $K_{\nu_i}$ is $\nu_i$-curvature of $M$ associated with $\nu_i.$ For more detail, let see \cite{M.Navarro}.\\
\indent Therefore, if $M$ is a surface immersed in a manifold with constant curvature  we then can define the Gaussian curvature by two ways. In this paper, we only use the formula of Gaussian curvature in term of the coefficients of the first fundamental form of $M,$ and apply this formula to define the surfaces of revolution whose Gaussian curvature are constant. \\
\indent We now introduce the notion surfaces of revolution in $\mathbb R^4.$ Let $C$ be a curve in ${\rm span}\{e_1,e_2,e_3\}$ parametrized by arc-length
\begin{equation}\label{C1} z(u)=\left(f(u),g(u),\rho(u),0\right),\ u\in I, \end{equation}
where $\rho(u)>0$.
The orbit of $C$ under the action of the orthogonal transformations of $\mathbb R^4$  leaving the  plane $Oxy,$
$$A=\left[\begin{matrix}1&0&0&0\\0&1&0&0\\0&0&\cos v&-\sin v\\0&0&\sin v&\cos v\end{matrix}\right],\ v\in\mathbb R,$$
is a  surface given by
\begin{equation}\label{R1}{[SR1]} \qquad {{\rm X}}(u,v)=\left(f(u),g(u),\rho(u)\cos v,\rho(u)\sin v\right), u\in I, v\in[0,2\pi). \end{equation}
Then $[SR_1]$ is called {\it surface of revolution} in $\mathbb R^4.$ That means, $[RS_1]$ is orbit of a curve by rotating it around a plane. \\
We also have the another kind of surface of revolution in $\mathbb R^4,$ it is the obit of a plane curve rotated  around both two planes. This surface is defined as following. Let $C$ be a regular curve in $\text{span}\{e_1,e_3\}$ parametrized by arc-length
$$r(u)=\left(f(u),0,g(u),0\right),\ u\in I,$$
and
$$B=\left[\begin{matrix}\cos\alpha v&-\sin\alpha v&0&0\\\sin\alpha v&\cos\alpha v&0&0\\0&0&\cos\beta v&-\sin\beta v\\0&0&\sin\beta v&\cos\beta v\end{matrix}\right],\ v\in\mathbb R,$$
be a subgroup of the  orthogonal transformations group  on $\mathbb R^4$, where  $\alpha,\beta$ are positive constants and $(f(u))^2+(g(u))^2\ne0$.\\
\indent The orbit of $C$ under the action of the subgroup $B$
is a  surface in $\mathbb R^4$ given by
\begin{equation}\label{gero}[SR_2]\qquad {\rm X}(u,v)=\left(f(u)\cos\alpha v,f(u)\sin\alpha v,g(u)\cos\beta v,g(u)\sin\beta v\right),\end{equation}
which is called {\it General rotational surface whose meridians lie in two-dimensional planes}. Then $r(u)$ is called meridian and $\alpha,\beta$ are called the rates of rotation.

Modifying this method, we can introduce the notion surfaces of revolution in Lorentz-Minkowski space. The Lorentz-Minkowski space $\Bbb R^{4}_1$ is the $4$-dimensional vector space $\Bbb R^{4}=\{( x_1,\ldots, x_{4}): x_i\in \Bbb R, i=1,\ldots, 4\}$ endowed the pseudo scalar product  defined by
$$\langle \textbf x, \textbf y\rangle=\sum_{i=1}^{3}x_iy_i-x_{4}y_{4},$$
where $\textbf x=( x_1,\ldots, x_{4}), \textbf y=(y_1, \ldots y_{4})\in \Bbb R^{4}.$\\
Let $C$ be a spacelike (timelike) curve in $\text{span}\{e_1,e_2,e_4\}$ parametrized by  arc-length,
$$z(u)=\left(f(u),g(u),0,\rho(u)\right),\ \rho(u)>0,\ \ u\in I.$$
The orbit of $C$ under the action of the orthogonal transformations of $\mathbb R_1^4$ leaving the spacelike plane $Oxy,$
$$A_S=\left[\begin{matrix}1&0&0&0\\0&1&0&0\\0&0&\cosh v&\sinh v\\0&0&\sinh v&\cosh v\end{matrix}\right],\ v\in\mathbb R,$$
is a  surface given by
\begin{equation}\label{hr1} [SR_3]\qquad
{\rm X} (u,v)=\left(f(u),g(u),\rho(u)\sinh v, \rho(u)\cosh v\right),\ u\in I,\ v\in\mathbb R.
\end{equation}

The surface $[SR_3]$ is called  the {\it surface of revolution of hyperbolic type} in $\mathbb R_1^4$. \\
\indent Let $C$ be a spacelike (timelike) curve in $\text{span}\{e_1,e_3,e_4\}$ parametrized by  arc-length,
$$z(u)=\left(\rho(u),0,f(u),g(u)\right),\ \rho(u)>0,\ u\in I.$$
The orbit of $C$ under the action of the orthogonal transformations of $\mathbb R_1^4$  leaving the timelike plane $Ozt,$
$$A_T=\left[\begin{matrix}\cos v&-\sin v&0&0\\\sin v&\cos v&0&0\\0&0&1&0\\0&0&0&1\end{matrix}\right],\ v\in\mathbb R,$$
is a  surface  given by
\begin{equation}\label{R2}[SR_4]\qquad {\rm X}(u,v)=\left(\rho(u)\cos v,\rho(u)\sin v,f(u),g(u)\right),\ v\in\mathbb R. \end{equation}
 The surface $[SR_4]$ then is called the {\it surface of revolution of elliptic type} in $\mathbb R_1^4.$
\section{Main Results}
The following theorems show that the constant property of Gaussian curvatures of surfaces of revolution in four-space depends only on the radius of rotation.
\begin{theorem}\label{thSR1} The Gaussian curvature of surface $[SR_1]$ is constant if and only if
\begin{enumerate}
\item $\rho(u)=C_1e^{Cu}+C_2e^{-Cu },$ provided $K=-C^2,$
\item $\rho(u)=C_1\sin(Cu)+C_2\cos(Cu),$ provided $K=C^2,$
\item $\rho=C_1u+C_2,$ provided $K=0,$
\end{enumerate}
where $C_1,C_2$ are constant such that for each $u\in I,$ $\rho(u)>0.$
 \end{theorem}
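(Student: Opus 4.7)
The plan is to reduce the problem to a second-order linear ODE in $\rho(u)$ by writing down the first fundamental form of $[SR_1]$ and invoking the curvature formula from Section~2. Because the generating curve $z(u)$ is parametrised by arc-length, most of the quantities that would otherwise appear simplify dramatically, and all of the interesting content is packed into $\rho$.

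First I would differentiate the parametrisation to get
$$\partial_u{\rm X}=(f',g',\rho'\cos v,\rho'\sin v),\qquad \partial_v{\rm X}=(0,0,-\rho\sin v,\rho\cos v),$$
and compute $E=(f')^2+(g')^2+(\rho')^2$, $F=0$, $G=\rho^2$. The arc-length hypothesis on $z$ reads $(f')^2+(g')^2+(\rho')^2=1$, so $E=1$ and the chart $(u,v)$ is orthogonal with $e=1$ and $g=\rho$. With $\varepsilon_1=\varepsilon_2=1$ the formula quoted from \cite{onel} degenerates: since $e$ does not depend on $v$ the second summand vanishes, and one is left with
$$K=-\frac{1}{\rho}\,(\rho')_u=-\frac{\rho''(u)}{\rho(u)}.$$
Thus $K$ is a constant along the surface if and only if $\rho$ solves the linear ODE $\rho''+K\rho=0$.

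Solving that ODE case-by-case according to the sign of $K$ yields exactly the three families in the statement: writing $K=-C^2$ with $C>0$ produces $\rho=C_1e^{Cu}+C_2e^{-Cu}$; writing $K=C^2$ produces $\rho=C_1\sin(Cu)+C_2\cos(Cu)$; and $K=0$ forces $\rho''=0$, i.e.\ $\rho=C_1u+C_2$. The constraint $\rho>0$ on $I$ is inherited from the defining hypothesis $\rho(u)>0$ used in~(\ref{C1}), and it merely restricts the admissible constants $C_1,C_2$ on the interval $I$. Conversely, any $\rho$ of one of these three shapes satisfies $\rho''+K\rho=0$, and hence $K=-\rho''/\rho$ is the stated constant.

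There is no real obstacle here: the only place a subtlety could arise is in justifying that the arc-length condition on the spatial curve $z$ translates into $E=1$ for the whole surface (which it does because the $v$-dependent entries of $\partial_u{\rm X}$ lie in the $(e_3,e_4)$-plane and contribute $(\rho')^2$ regardless of $v$), and in tracking the sign conventions so that the ODE $\rho''+K\rho=0$ appears with the right sign. Once the identity $K=-\rho''/\rho$ is in hand, the rest is the standard classification of solutions of a constant-coefficient second-order linear ODE, together with the positivity condition.
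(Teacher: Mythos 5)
Your proof is correct and follows essentially the same route as the paper: compute $E=1$, $F=0$, $G=\rho^2$ from the arc-length hypothesis, apply the orthogonal-coordinates curvature formula to get $K=-\rho''/\rho$, and solve the resulting constant-coefficient ODE case by case. The only difference is that you spell out the intermediate computations (the tangent vectors and the vanishing of the $v$-derivative term) that the paper leaves implicit.
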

 \begin{proof}
 The coefficients of the first fundamental form of $[SR_1]$ are defined
 $$E=\langle {\rm X}_u,{\rm X}_u  \rangle=1,\ F=\langle {\rm X}_u,{\rm X}_v   \rangle=0,\ G=\langle {\rm X}_v,{\rm X}_v  \rangle=\rho^2.$$
 Therefore, the Gaussian curvature is defined
 $$K=-\frac{\rho''}{\rho}.$$
 Solving the equation $K=-C^2,$ ($C^2,0$) we have the conclusion of Theorem \ref{thSR1}.
 \end{proof}
 We have the similar result for surfaces of revolution in $\mathbb R_1^4.$
\begin{theorem}\label{thrSr3} The Gaussian curvature of Surface $[SR_3]$ (or $[SR_4]$) is constant, $K=C,$ if and only if
\begin{enumerate}
\item $\rho(u)=C_1e^{\lambda u }+C_2e^{-\lambda u },$ when $\varepsilon C=-2\\lambda^2<0,$
\item $\rho(u)=C_1\sin(\lambda u)+C_2\cos(\lambda u),$ when $\varepsilon C=\lambda ^2>0,$
\item $\rho=C_1u+C_2,$ when $C=0,$
\end{enumerate}
where $C_1,C_2$ are constant such that $\rho(u)>0,u\in I$ and $\varepsilon$ is the sign of $E.$
 \end{theorem}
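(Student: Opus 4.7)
The plan is to mimic the proof of Theorem \ref{thSR1}: compute the first fundamental form of each surface, invoke the orthogonal-coordinate Gaussian curvature formula quoted in the Preliminaries, and reduce the constancy condition on $K$ to a second-order linear ODE in $\rho$. The only genuinely new ingredient compared to the Euclidean case is careful bookkeeping of the signs coming from the Lorentzian inner product.

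First I would differentiate. For $[SR_3]$, from ${\rm X}_u=(f',g',\rho'\sinh v,\rho'\cosh v)$ and ${\rm X}_v=(0,0,\rho\cosh v,\rho\sinh v)$, together with the identity $\sinh^2 v-\cosh^2 v=-1$, the cross terms in $F$ cancel and the $v$-dependence in $E$ and $G$ collapses. The arc-length hypothesis on the meridian, $f'^2+g'^2-\rho'^2=\varepsilon\in\{\pm1\}$, then gives $E=\varepsilon$, $F=0$, $G=\rho^2$. The computation for $[SR_4]$ is completely parallel: the Euclidean identity $\cos^2 v+\sin^2 v=1$ handles the spacelike block, the arc-length condition $\rho'^2+f'^2-g'^2=\varepsilon$ closes the computation, and one obtains the same $E=\varepsilon$, $F=0$, $G=\rho^2$. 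Since $F=0$ in both cases, the coordinates are orthogonal and the formula from the Preliminaries applies.

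Plugging in $e=|E|^{1/2}=1$, $g=|G|^{1/2}=\rho$, $\varepsilon_1=\varepsilon$ and $\varepsilon_2=+1$, and using that $e$ is constant so the $\partial_v$-term vanishes, the curvature reduces to
\begin{align*}
K=-\varepsilon\,\frac{\rho''}{\rho}.
\end{align*}
Imposing $K\equiv C$ thus reduces the theorem to the linear ODE $\rho''+\varepsilon C\rho=0$. Splitting into the three regimes $\varepsilon C<0$, $\varepsilon C>0$, $C=0$ and setting $\varepsilon C=-\lambda^2$ or $\varepsilon C=\lambda^2$ where appropriate, direct integration produces exactly the three parametrizations claimed, with the positivity constraint $\rho>0$ on $I$ absorbed into the choice of the constants $C_1,C_2$ as in Theorem \ref{thSR1}. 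I expect no real obstacle: the only subtle point is tracking $\varepsilon$ through the Lorentzian first fundamental form so that the sign appearing in the ODE is correct, after which the analysis is identical in shape to the Euclidean case.
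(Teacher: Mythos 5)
Your proposal is correct and follows the same route as the paper: compute $E=\varepsilon$, $F=0$, $G=\rho^2$ from the Lorentzian inner product and the arc-length condition, apply the orthogonal-coordinate curvature formula to get $K=-\varepsilon\rho''/\rho$, and solve the resulting linear ODE $\rho''+\varepsilon C\rho=0$ in the three sign regimes. Your explicit tracking of $\varepsilon_1=\varepsilon$ (and the reading $\varepsilon C=-\lambda^2$ in case 1, where the printed statement has a typo) is exactly the detail the paper leaves implicit when it says the argument is "similar to the proof of Theorem 3.1."
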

 \begin{proof} The coefficients of the first fundamental form of $[SR_3]$ (or $[SR_4]$) are defined
$$E=(f'(u))^2+(g'(u))^2-(\rho'(u))^2=\varepsilon,\  F=0,\ G=\left(\rho(u)\right)^2>0,$$
where $\varepsilon=\pm1.$ It is similar to the proof of Theorem \ref{thSR1}, we have the result of this Theorem.\end{proof}
For the general rotational surfaces whose meridians lie in two-dimensional planes in $\mathbb R^4,$ the following Theorem gives us a differential equation that characterizes  the constant Gaussian curvature  surfaces.
\begin{theorem}\label{thrSR2}
The Gaussian curvature of $[SR_2]$ is constant if and only if
$$f(u)=\frac{\sqrt G}{\alpha}\cos\phi(u),\ g(u)=\frac{\sqrt G}{\beta}\sin\phi(u),$$
where $\phi(u)$ are solutions of the following equation
\begin{equation}\label{slsphi}
G\left(\frac{\sin^2\phi}{\alpha^2}+\frac{\cos^2\phi}{\beta^2} \right)(\phi')^2+2G\sin \phi\cos \phi\left(\frac{1}{\beta^2}-\frac{1}{\alpha^2}\right)\phi' +\frac{(G')^2}{4G}\left(\frac{\cos^2\phi}{\alpha^2}+\frac{\sin^2\phi}{\beta^2}\right) -1=0,
\end{equation}
and \begin{equation}\label{g21}G=\left(C_1e^{Cu}+C_2e^{-Cu}\right)^2,\  \text{if}\  K=-C^2,\end{equation}
\begin{equation}\label{g22}G=\left(C_1\sin(Cu)+C_2\cos(Cu)\right)^2,\  \text{if}\  K=C^2,\end{equation}
\begin{equation}\label{g23}G=\left(C_1u+C_2\right)^2,\  \text{if}\  K=0,\end{equation}
where $C,C_1,C_2$ are constant such that $G\ne 0$.
\end{theorem}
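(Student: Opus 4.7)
The plan is to mirror the argument of Theorem \ref{thSR1}: first compute the first fundamental form of $[SR_2]$, use it to express $K$ as a simple differential operator in the coefficient $G$, then solve for $G$, and finally extract the parametrization of the meridian from the two algebraic/ODE constraints on $(f,g)$.

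First I would compute $E,F,G$ for the parametrization (\ref{gero}). A direct calculation gives $E=(f')^2+(g')^2=1$ (arc-length), and the cross terms in $F=\langle X_u,X_v\rangle$ cancel because $\alpha f f'(-\sin\alpha v\cos\alpha v+\sin\alpha v\cos\alpha v)=0$ and similarly for the $(f,g)$-block, so $F=0$. The remaining coefficient is
\[
G=\alpha^2 f(u)^2+\beta^2 g(u)^2.
\]
Thus $(u,v)$ is an orthogonal coordinate system with $e=|E|^{1/2}=1$, $g=|G|^{1/2}=\sqrt{G}$. Feeding this into the formula from the Preliminaries kills the $e_v$ term entirely and leaves
\[
K=-\frac{(\sqrt{G})''}{\sqrt{G}}.
\]

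Next I would impose $K=\text{const}$ and solve the resulting linear second-order ODE $(\sqrt{G})''+K\sqrt{G}=0$ for $\sqrt{G}$. This is exactly the same ODE that appeared in the proof of Theorem \ref{thSR1}, so the three cases $K=-C^2$, $K=C^2$, $K=0$ yield respectively $\sqrt{G}=C_1 e^{Cu}+C_2 e^{-Cu}$, $\sqrt{G}=C_1\sin(Cu)+C_2\cos(Cu)$, $\sqrt{G}=C_1 u+C_2$, giving formulas (\ref{g21})--(\ref{g23}) upon squaring.

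It remains to convert the two scalar conditions on $(f,g)$, namely the algebraic one $\alpha^2 f^2+\beta^2 g^2=G$ and the arc-length one $(f')^2+(g')^2=1$, into the stated parametric description. The algebraic condition is a (possibly degenerate) ellipse of semi-axes $\sqrt{G}/\alpha$ and $\sqrt{G}/\beta$ at each $u$, so it can be resolved by introducing an angle $\phi(u)$ with
\[
f(u)=\frac{\sqrt{G}}{\alpha}\cos\phi(u),\qquad g(u)=\frac{\sqrt{G}}{\beta}\sin\phi(u),
\]
which automatically verifies $\alpha^2 f^2+\beta^2 g^2=G$. The main (but routine) computation is then to substitute these expressions for $f,g$ into $(f')^2+(g')^2=1$. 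Differentiating and using $(\sqrt{G})'=G'/(2\sqrt{G})$, so that the mixed products carry the factor $2\sqrt{G}\cdot(\sqrt{G})'=G'$, the three terms of the form $(\sqrt{G})'^2$, $(\sqrt{G})'\phi'$, and $\phi'^2$ organize themselves into the coefficients $\frac{(G')^2}{4G}(\cos^2\phi/\alpha^2+\sin^2\phi/\beta^2)$, $G'\sin\phi\cos\phi(1/\beta^2-1/\alpha^2)\phi'$, and $G(\sin^2\phi/\alpha^2+\cos^2\phi/\beta^2)(\phi')^2$. Setting the sum equal to $1$ produces equation (\ref{slsphi}). The only genuine obstacle is keeping track of signs and cross-terms in this expansion; everything else is a direct substitution. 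Finally, reversing the steps (any $\phi$ solving (\ref{slsphi}) with $G$ from (\ref{g21})--(\ref{g23}) produces an arc-length meridian with the required $G$) gives the converse.
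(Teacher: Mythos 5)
Your proposal follows the paper's proof exactly: compute the first fundamental form $E=1$, $F=0$, $G=\alpha^2f^2+\beta^2g^2$, obtain $K=-(\sqrt G)_{uu}/\sqrt G$, solve the resulting linear ODE for $\sqrt G$ in the three sign cases, and substitute the elliptic parametrization $\alpha f=\sqrt G\cos\phi$, $\beta g=\sqrt G\sin\phi$ into the arc-length condition $(f')^2+(g')^2=1$. One remark: your expansion correctly gives the mixed term the coefficient $2\sqrt G\,(\sqrt G)'\sin\phi\cos\phi\left(\tfrac{1}{\beta^2}-\tfrac{1}{\alpha^2}\right)\phi'=G'\sin\phi\cos\phi\left(\tfrac{1}{\beta^2}-\tfrac{1}{\alpha^2}\right)\phi'$, which shows that the factor $2G$ in the middle term of the displayed equation (\ref{slsphi}) should read $G'$; so your computation actually yields the corrected form of that equation rather than literally reproducing it.
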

\begin{proof}
The coefficients of the first fundamental form of $[SR_2]$ are defined
\begin{equation}\label{hscb1SR2} E=(f')^2+(g')^2=1,\ F=0,\ G=\alpha^2f^2+\beta^2g^2.\end{equation}
We have
$$K=-\frac{\left(\sqrt{G}\right)_{uu}}{\sqrt G}.$$
Solving the equation $K=-C^2 (C^2,0),$ we have $G.$ Setting
$$\alpha f(u)=\sqrt G\cos(\phi(u)), \beta g(u)=\sqrt G\sin(\phi(u))$$
and substituting for (\ref{hscb1SR2}) we then have  (\ref{slsphi}).
\end{proof}
 In the case two rates of rotation are equal, $\alpha=\beta,$  we can solve the equation (\ref{slsphi}) and find the parametrization of the meridians.
\begin{corollary} If $\alpha=\beta$ then the Gaussian curvature of  $[SR_2]$ is constant if and only if
\begin{enumerate}
\item In the case $K=C^2,$
$$f(u)=\frac{C_1\sin(Cu)+C_2\cos(Cu)}{\alpha}\cos\phi(u),g(u)=\frac{C_1\sin(Cu)+C_2\cos(Cu)}{\alpha}\sin\phi(u)$$
where
$$\phi(u)=\int\frac{\sqrt{1-C^2[C_1\cos(Cu)-C_2\sin(Cu)]^2}}{C_1\sin(Cu)+C_2\cos(Cu)}du,$$
 $C_1,C_2$ are constants such that the formula under the integral sign is defined.
\item In the case $K=-C^2,$
$$f(u)=\frac{C_1e^{Cu}+C_2e^{-Cu}}{\alpha}\cos\phi(u),\ g(u)=\frac{C_1e^{Cu}+C_2e^{-Cu}}{\alpha}\sin\phi(u),$$
where
$$\phi(u)=\int\frac{\sqrt{1-C^2[C_1e^{Cu}-C_2e^{-Cu}]^2}}{C_1e^{Cu}+C_2e^{-Cu}}du,$$
 $C_1,C_2$ are constants such that the formula under the integral sign is defined.
\end{enumerate}
\begin{enumerate}
\item[3.] In the case $K=0,$
$$f(u)=\frac{C_1u+C_2}{\alpha}\cos\phi(u),g(u)=\frac{C_1u+C_2}{\alpha}\sin\phi(u),$$
where
$$\phi(u)=\int{\frac{\sqrt{1-C_1^2}}{C_1u+C_2}}du,$$
$C_1,C_2$ are constants such that $|C_1|\leq 1$ and $C_1u+C_2\ne0,\forall u\in I.$
\end{enumerate}
\end{corollary}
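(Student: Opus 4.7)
The plan is to specialize the characterizing equation (\ref{slsphi}) from Theorem~\ref{thrSR2} to the case $\alpha=\beta$ and then integrate the resulting first-order ODE for $\phi$ directly, once for each of the three curvature cases.

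First, setting $\beta=\alpha$ in (\ref{slsphi}) kills the middle term outright (its coefficient $1/\beta^2-1/\alpha^2$ vanishes) and collapses the isotropic factors $\sin^2\phi/\alpha^2+\cos^2\phi/\alpha^2$ and $\cos^2\phi/\alpha^2+\sin^2\phi/\alpha^2$ to $1/\alpha^2$. The equation reduces to the purely algebraic relation
$$\frac{G}{\alpha^2}(\phi')^2+\frac{(G')^2}{4G\alpha^2}-1=0,$$
which I would rewrite using $(G')^2/(4G)=[(\sqrt G)']^2$ as
$$\phi'=\frac{\sqrt{\alpha^2-[(\sqrt G)']^2}}{\sqrt G},$$
choosing a branch of the square root. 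Thus $\phi$ is obtained by a single quadrature as soon as $\sqrt G$ is known, and the rest of the proof is a case-by-case substitution.

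Next I would plug in each of the expressions (\ref{g21})--(\ref{g23}) for $G$ supplied by Theorem~\ref{thrSR2}. For $K=C^2$ one has $\sqrt G=C_1\sin(Cu)+C_2\cos(Cu)$, whence $(\sqrt G)'=C[C_1\cos(Cu)-C_2\sin(Cu)]$, which after absorbing the constant $\alpha$ into the constants of integration (equivalently, normalizing $\alpha=1$) gives exactly the integrand of part~(1). The case $K=-C^2$ is formally identical with the trigonometric primitives replaced by $C_1e^{Cu}+C_2e^{-Cu}$, giving part~(2). In the flat case $K=0$ the derivative $(\sqrt G)'=C_1$ is constant, so the integrand degenerates to the very simple $\sqrt{1-C_1^2}/(C_1u+C_2)$ asserted in part~(3). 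Once $\phi$ is in hand, the formulas for $f$ and $g$ follow immediately from the substitution $\alpha f=\sqrt G\cos\phi$, $\alpha g=\sqrt G\sin\phi$ that was already introduced in the proof of Theorem~\ref{thrSR2}, and these automatically satisfy $(f')^2+(g')^2=1$ because that is what the derivation of (\ref{slsphi}) was encoding.

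The step that demands the most care is not the integration but the bookkeeping of admissibility: in each case one must record the conditions on $C,C_1,C_2$ that simultaneously keep the radicand nonnegative, keep the denominator $\sqrt G$ positive on the interval $I$ (so that $\rho=\sqrt G>0$), and make $(f,g)$ a regular curve. This yields the stated constraint $|C_1|\le 1$ with $C_1u+C_2\neq 0$ in the flat case, and the analogous pointwise positivity conditions on $1-C^2[C_1\cos(Cu)-C_2\sin(Cu)]^2$ and $1-C^2[C_1e^{Cu}-C_2e^{-Cu}]^2$ in the two curved cases. This is the only delicate part; the rest is a direct specialization of Theorem~\ref{thrSR2}.
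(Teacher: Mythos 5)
Your proof is correct and follows exactly the route the paper intends (the paper in fact omits the proof, saying only that when $\alpha=\beta$ one ``can solve the equation (\ref{slsphi})''): the cross term vanishes, the quadratic in $\phi'$ degenerates to $G(\phi')^2=\alpha^2-\bigl[(\sqrt G)'\bigr]^2$, and each curvature case reduces to a single quadrature of $\phi'=\sqrt{\alpha^2-[(\sqrt G)']^2}\big/\sqrt G$. Your remark that the raw computation puts $\alpha^2$ where the printed integrands have $1$ --- reconcilable by rescaling $C_1,C_2$ by $\alpha$ or normalizing $\alpha=1$ --- correctly identifies and resolves a small inconsistency in the statement itself.
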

{\bf Acknowledgements} The author would like to thank Prof. Frank Morgan for his comment and  appended.

\address{ Dang Van Cuong\\
Department of Natural Sciences \\
Duy Tan University \\
Danang \\
Vietnam
}
{dvcuong@duytan.edu.vn}
\end{document}